\title{On the one-dimensional continuity equation with a nearly incompressible vector field}
\author{\it Nikolay A. Gusev\footnote{Steklov Mathematical Institute of
Russian Academy of Sciences; email: \texttt{\href{mailto:n.a.gusev@gmail.com}{n.a.gusev@gmail.com}}}}
\date{\today}
\begin{document}
\maketitle

\begin{abstract}
We consider the Cauchy problem for the continuity equation with a bounded nearly incompressible vector field $b\colon (0,T) \times \R^d \to \R^d$, $T>0$. This class of vector fields arises in the context of hyperbolic conservation laws (in particular, the Keyfitz-Kranzer system).

It is well known that in the generic multi-dimensional case ($d\ge 1$) near incompressibility is sufficient for existence of bounded weak solutions, but uniqueness may fail (even when the vector field is divergence-free), and hence further assumptions on the regularity of $b$ (e.g. Sobolev regularity) are needed in order to obtain uniqueness.

We prove that in the one-dimensional case ($d=1$) near incompressibility is sufficient for existence \emph{and} uniqueness of \emph{locally integrable} weak solutions. We also study compactness properties of the associated Lagrangian flows.
\end{abstract}

\section{Introduction}

Let $b\in L^\infty(I\times \R^d ; \R^d)$ denote a time-dependent vector field on $\R^d$, where $I=(0,T)$, $T>0$, $d\in \mathbb N$. Consider the Cauchy problem for the continuity equation
\begin{equation}\label{Cauchy-problem}
\left\{
\begin{aligned}
&\partial_t u + \mathop{\mathrm{div}_x} (u b) = 0 \qquad \text{in $I\times \R^d$},
\\
&u|_{t=0} = \bar u \qquad\text{in $\R^d$},
\end{aligned}
\right.
\end{equation}
where $\bar u \in L^1_\loc(\R^d)$ is the initial condition and $u\colon I\times \R^d\to \R$ is the unknown. A function $u\in L^1_\loc(I\times \R^d)$ is called \emph{weak solution of \eqref{Cauchy-problem}} if it satisifies \eqref{Cauchy-problem} in sense of distributions:
\begin{equation*}
\int \int u(t,x) (\d_t \varphi(t,x) + b(t,x) \d_x \varphi(t,x)) \, dx \, dt + \int \bar u(x) \varphi(0, x) \, dx = 0
\end{equation*}
for any $\varphi \in C^1_c([0,T)\times \R^d)$.

Existence and uniqueness of weak solution of \eqref{Cauchy-problem} are well-known when
the vector field $b$ is Lipschitz continuous. However in connection with many problems in mathematical physics one has to study \eqref{Cauchy-problem} when $b$ is non-Lipschitz (in general). In particular, vector fields with Sobolev regularity arise in connection with fluid mechanics \cite{DiPernaLions}, and vector fields with bounded variation arise in connection with nonlinear hyperbolic conservation laws \cite{AmbrosioBV}. Therefore one would like to find the weakest assumptions on $b$ under which weak solution of \eqref{Cauchy-problem} exists and is unique.

For a generic bounded vector field $b$ concentrations may occur and therefore the Cauchy problem \eqref{Cauchy-problem} can have no bounded weak solutions. However under mild additional assumptions on $b$ existence of bounded weak solutions can be proved. Namely, the following class of vector fields has been studied in connection with the so-called Keyfitz-Kranzer system (introduced in \cite{KeyfitzKranzer}):

\begin{definition}\label{def-ni}
A vector field $b\in L^\infty(I\times \R^d ; \R^d)$ is called \emph{nearly incompressible with density $\rho \colon I\times \R^d \to \R$} if there exists $C>0$ such that $1/C \le \rho(t,x) \le C$ for a.e. $(t,x)\in I \times \R^d$ and $\rho$ solves 
$\partial_t \rho + \mathop{\mathrm{div}_x} (\rho b) = 0$
(in sense of distributions).
\end{definition}

It is well-known that near incompressibility is sufficient for existence of bounded weak solutions of \eqref{Cauchy-problem}.
However in the generic multidimensional case ($d \ge 2$) 
it is not sufficient for uniqueness. For example, there exists a bounded divergence-free autonomous vector field on the plane ($d=2$), for which \eqref{Cauchy-problem} has a nontrivial
bounded weak solution with zero initial data \cite{ABC2}.

Uniqueness of weak solutions has been established for some classes of weakly differentiable vector fields \cite{DiPernaLions,AmbrosioBV}.
Recently new uniqueness results were obtained for continuous vector fields \cite{Crippa,Shaposhnikov} (without explicit assumptions on weak differentiablilty). Note that in general a nearly incompressible vector field does not have to be continuous (and vice versa). Uniqueness of locally integrable weak solutions has been proved in \cite{CarCri16}
for Sobolev vector fields under additional assumption of continuity.

Uniqueness of bounded weak solutions for nearly incompressible vector fields in the two-dimensional case ($d=2$) was also studied in \cite{BBG2016}. In particular it was proved that uniqueness holds when $b\ne 0$ a.e., or when $b\in BV$.

Our main result is the following:

\begin{theorem}\label{t-main}
Suppose that $b \in L^\infty(I\times \R; \R)$ is nearly incompressible. Then for any initial condition $\bar u\in L^1_\loc(\R)$ the Cauchy problem \eqref{Cauchy-problem} has a \emph{unique} weak solution $u\in L^1_\loc(I \times \R)$.
\end{theorem}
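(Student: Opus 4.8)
The plan is to reduce the problem to the trivial equation $\partial_\tau \tilde w = 0$ by a (nonsmooth) change of variables built from the density $\rho$. First I would exploit the special two-dimensional structure of space-time: the identity $\partial_t\rho + \partial_x(\rho b)=0$ says precisely that the field $(\rho,\rho b)$ is divergence-free on the convex (hence simply connected) domain $I\times\R$, so by the distributional Poincar\'e lemma there is a distribution $H$ with $\partial_x H=\rho$ and $\partial_t H=-\rho b$. Since both derivatives lie in $L^\infty$, $H$ has a globally Lipschitz representative on $I\times\R$. Because $\partial_x H=\rho\in[1/C,C]$, for each $t$ the map $x\mapsto H(t,x)$ is a bi-Lipschitz increasing bijection of $\R$ onto $\R$ (proper since $\rho\ge 1/C$); denote its inverse by $X(t,\cdot)$, and set $\Phi(t,x)=(t,H(t,x))$, a bi-Lipschitz homeomorphism of $I\times\R$ with inverse $\Phi^{-1}(\tau,y)=(\tau,X(\tau,y))$ and Jacobians bounded between $1/C$ and $C$. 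Note that $H$ has a well-defined Lipschitz trace $H(0,\cdot)$, so I can work with $\rho_0:=\partial_x H(0,\cdot)\in[1/C,C]$ without invoking a trace theory for $\rho$ itself.

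Next I would transport the equation through $\Phi$. The heuristic is that $w:=u/\rho$ satisfies the non-conservative transport equation $\partial_t w+b\,\partial_x w=0$ and that $H$ is a first integral of $b$ (indeed $\partial_t H+b\,\partial_x H=-\rho b+b\rho=0$), so $w$ should be constant along the level sets of $H$, i.e.\ depend only on $y=H(t,x)$. To make this rigorous at the distributional level, given $\psi\in C^1_c([0,T)\times\R)$ I set $\varphi(t,x):=\psi(t,H(t,x))$; this is Lipschitz with compact support, and a chain-rule computation valid a.e.\ gives $\partial_t\varphi+b\,\partial_x\varphi=(\partial_\tau\psi)(t,H(t,x))$. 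After checking that the weak formulation extends from $C^1_c$ to compactly supported Lipschitz test functions (by mollification, using $u\in L^1_\loc$ and $b\in L^\infty$), I plug $\varphi$ into the weak formulation and change variables $y=H(t,x)$ in each time slice via the area formula. Writing $\tilde w(t,y):=u(t,X(t,y))/\rho(t,X(t,y))$ and $\bar w_0(y):=\bar u(X(0,y))/\rho_0(X(0,y))$, the weak formulation becomes exactly
\[
\int_I\int_\R \tilde w\,\partial_\tau\psi\,dy\,d\tau+\int_\R \bar w_0\,\psi(0,\cdot)\,dy=0,
\]
which is the weak formulation of $\partial_\tau\tilde w=0$ with initial datum $\bar w_0$. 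Since $\Phi$ is bi-Lipschitz with bounded Jacobian and $\rho\in[1/C,C]$, the map $u\mapsto\tilde w$ is a bijection of $L^1_\loc(I\times\R)$ onto itself which carries weak solutions of \eqref{Cauchy-problem} to weak solutions of the trivial equation and back.

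Finally I would invoke the elementary existence and uniqueness for $\partial_\tau\tilde w=0$: its unique $L^1_\loc$ weak solution with datum $\bar w_0$ is $\tilde w(\tau,y)=\bar w_0(y)$. Transporting back through $\Phi$ yields both existence and uniqueness for \eqref{Cauchy-problem}, together with the explicit representation
\[
u(t,x)=\rho(t,x)\,\bar w_0\big(H(t,x)\big),\qquad \bar w_0=(\bar u/\rho_0)\circ X(0,\cdot).
\]

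I expect the main obstacle to be the rigorous justification of this nonsmooth change of variables rather than the underlying idea, which is clean. Concretely, the delicate points are: (i) promoting $H$ to a genuine bi-Lipschitz homeomorphism (distributional Poincar\'e lemma, Lipschitz representative, properness of each slice); (ii) enlarging the admissible test class to Lipschitz functions and justifying the a.e.\ chain rule for $\varphi=\psi\circ\Phi$ even though $b$ and $\rho$ are merely $L^\infty$; and (iii) carrying out the slicewise area-formula change of variables for $L^1_\loc$ densities and matching the initial terms, where the key simplification is to read off $\rho_0$ from the Lipschitz trace $H(0,\cdot)$ so as to bypass a separate trace theory for $\rho$. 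Once these are in place, uniqueness follows essentially for free from the triviality of $\partial_\tau\tilde w=0$.
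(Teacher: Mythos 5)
Your proposal is correct in outline, but it takes a genuinely different route from the paper's, even though both revolve around the same central object: the Lipschitz Hamiltonian $H$ with $\partial_x H=\rho$, $\partial_t H=-\rho b$ (the paper's \eqref{e-dxH}). Where you straighten the equation by the global change of variables $\Phi(t,x)=(t,H(t,x))$ and reduce everything to $\partial_\tau\tilde w=0$, the paper never changes variables: for uniqueness (Theorem~\ref{th-L1-unique}) it mollifies $H$, uses test functions of the form $f(H_\eps)$, and controls their supports through a level-set/cone analysis before passing to the limit $\eps\to0$; for existence (Theorem~\ref{th-existence}) it constructs the regular Lagrangian flow $X(t,x)=Y(t,H(0,x))$, where $Y(t,\cdot)=H(t,\cdot)^{-1}$ is exactly your $X(t,\cdot)$, and pushes $\bar u\,\Le$ forward (your representation $u=\rho\cdot(\bar w_0\circ H)$ is the same formula in disguise). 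Each route buys something. Yours is more unified: one bi-Lipschitz straightening yields uniqueness, existence and the explicit solution formula simultaneously, and under near incompressibility --- the hypothesis of the statement --- all the structure you need is present. The paper's mollification argument, by contrast, survives under much weaker assumptions ($\rho\in L^\infty_\loc$, $\rho>0$ a.e., with no positive lower bound), where your $\Phi$ fails to be bi-Lipschitz and $H(\tau,\cdot)$ need not map onto $\R$ (its image can be a bounded interval), so the reduction to the trivial equation on all of $I\times\R$ breaks down; and the Lagrangian construction feeds the paper's later results on flows and their compactness.

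One point you should make explicit rather than fold into the word ``bi-Lipschitz'': properness of the slices ($\rho\ge 1/C$) makes $\Phi$ a homeomorphism, but Lipschitz continuity of $\Phi^{-1}$ \emph{in the time variable} does not follow from the bounds on $\rho$ alone --- it is the cone estimate $|H(t',x')-H(t,x)|\ge C_1\bigl(|x'-x|-\|b\|_\infty|t'-t|\bigr)$, i.e.\ the paper's \eqref{e-H-difference} proved in Lemma~\ref{l-level-sets}, and its Lipschitz constant involves $\|b\|_\infty$, not $C$. This estimate is genuinely needed in your ``and back'' direction (existence): there the pulled-back test function $\psi=\varphi\circ\Phi^{-1}$ must be jointly Lipschitz, and the a.e.\ chain rule via Rademacher's theorem applied to $X(\tau,y)$ requires joint Lipschitz continuity of $X$. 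The remaining technical debts you list --- extending the test class to Lipschitz functions, the a.e.\ chain rule using that bi-Lipschitz maps preserve null sets, the slice-wise area formula, and reading the initial density off the trace $\rho_0=\partial_x H(0,\cdot)$ --- are real but standard, and your accounting of them is accurate; filling them in would essentially reconstruct the content of the paper's Lemma~\ref{l-level-sets} on the existence side while keeping a genuinely different, mollification-free uniqueness argument.
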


Existence of bounded weak solutions of \eqref{Cauchy-problem} with bounded $\bar u$ for nearly incompressible vector fields is well-known (see e.g. \cite{CrippaThesis} for the case of vector fields with bounded divergence). Uniqueness of bounded weak solutions in the one-dimensional case has already been proved in \cite{BouchotJames98}.
The novelty of Theorem~\ref{t-main} is that it applies to merely locally integrable weak solutions.


\section{Uniqueness of locally integrable weak solutions}

\begin{definition}\label{def-density}
A non-negative function $\rho \in L^1_\loc(I\times \R^d; \R)$ is called a \emph{density associated with a vector field} $b\in L^1_\loc(I\times \R^d; \R^d)$ if $\rho b \in L^1_\loc(I\times \R^d; \R^d)$ and $\d_t \rho + \div(\rho b) = 0$ in $\ss D'(I\times \R^d)$.
\end{definition}

\begin{remark}
If a vector field $b\in L^\infty(I\times \R^d; \R^d)$ admits a density $\rho$ and there exist strictly positive constants $C_1, C_2$ such that $C_1 \le \rho(t,x) \le C_2$ for a.e. $(t,x) \in I\times \R^d$ then $b$ is nearly incompressible.
\end{remark}

Suppose that a vector field $b\in L^1_\loc(I\times \R^d; \R^d)$ admits a density $\rho$.
Since $\partial_t \rho + \d_x (\rho b) = 0$ in $\ss D'((0,T)\times \R)$ there exists $H\in W^{1,1}_\loc((0,T)\times \R)$ such that
\begin{equation} \label{e-dxH}
\d_x H = \rho  \quad \text{and} \quad
\d_t H = - \rho b. 
\end{equation}
in $\ss D'(I\times \R^d)$. 

\begin{definition}
If a function $H\colon I\times \R \to \R$ satisfies \eqref{e-dxH} then it is called a \emph{Hamiltonian associated with $(\rho, b)$}.
\end{definition}

Clearly the Hamiltonian $H$ is unique up to an additive constant. Moreover, if $\rho, b\in L^\infty(I\times \R)$ then the Hamiltonian can be chosen in such a way that it is Lipschitz continuous, i.e. $H\in \Lip([0,T]\times \R)$.

\begin{theorem}\label{th-L1-unique}
Suppose that a vector field $b\in L^\infty(I\times \R; \R)$ admits a density $\rho \in L^\infty_\loc(I\times \R; \R)$ such that $\rho(t,x)>0$ for a.e. $(t,x) \in I\times \R$.
If $u\in L^1_\loc(I\times \R; \R)$ is a weak solution of
\eqref{Cauchy-problem} with $\bar u \equiv 0$ then $u(t,x)=0$ for a.e. $(t,x)\in I \times \R$.
\end{theorem}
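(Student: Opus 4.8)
The plan is to exploit the fact that in one space dimension the continuity equation is exactly the integrability condition for a Hamiltonian, and that the level sets of $H$ straighten the characteristics of $b$. Since $u$ is a weak solution of \eqref{Cauchy-problem} with $\bar u \equiv 0$, the field $(u, ub)\in L^1_\loc(I\times\R;\R^2)$ is divergence-free in $(t,x)$, which is precisely the condition guaranteeing, on the simply connected set $I\times\R$, the existence of a potential $U\in W^{1,1}_\loc(I\times\R)$ with $\partial_x U = u$ and $\partial_t U = -ub$ (the same construction that produced $H$ from $\rho$ in \eqref{e-dxH}). Thus $\nabla U = u\,(-b,1)$ and $\nabla H = \rho\,(-b,1)$ are parallel, and since $\rho>0$ a.e. the map $x\mapsto H(t,x)$ is strictly increasing for a.e. $t$. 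The theorem follows once we show that $U(t,\cdot)$ is constant for a.e. $t$, since then $u=\partial_x U=0$.

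Next I would straighten the flow by the change of variables $y=H(t,x)$. Writing $X(t,\cdot)$ for the (generalized) inverse of the increasing function $H(t,\cdot)$, differentiation of $H(t,X(t,y))=y$ gives formally $\partial_t X = -\partial_t H/\partial_x H = b(t,X)$, so that the curves $\{H=\text{const}\}$ are integral curves of $b$. I would then consider $w(t,y):=U(t,X(t,y))$ and show that $\partial_t w = 0$ in $\ss D'$: testing against $\psi(t,y)$ and changing variables back to $(t,x)$, the $t$-derivative produces exactly $\partial_t U + b\,\partial_x U = -ub+ub = 0$. Hence $w$ does not depend on $t$.

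The initial condition enters through the trace of $U$ at $t=0$. From the weak formulation with $\bar u\equiv 0$ one checks that $t\mapsto u(t,\cdot)$ admits a weakly continuous representative with trivial trace, which translates into $U(t,\cdot)$ tending to a constant as $t\to 0^+$; normalising the additive constant in $U$, this limit may be taken to be $0$, so $w(0,\cdot)\equiv 0$. Combined with the $t$-independence of $w$ this yields $w\equiv 0$, i.e. $U(t,X(t,y))=0$ for a.e. $(t,y)$. Since $X(t,\cdot)$ is onto the range of $H(t,\cdot)$, this forces $U(t,\cdot)$ to be constant in $x$, and therefore $u=\partial_x U=0$ a.e., as claimed.

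The main obstacle is the rigorous justification of the change of variables under the weak hypotheses at hand: $\rho$ is only locally bounded and, crucially, \emph{not} bounded below, so the inverse $X(t,\cdot)$ is merely continuous and monotone rather than Lipschitz, while $U$ is only $W^{1,1}_\loc$. Establishing $\partial_t w=0$ therefore cannot rely on a naive chain rule; I expect to need either a regularization of $(\rho,b)$ together with stable a priori bounds and a limiting argument, or a careful use of the area formula for monotone maps to transport the distributional identity $\partial_t U + b\,\partial_x U=0$ through the transformation. Controlling the degenerate set $\{\rho\approx 0\}$, where characteristics may cluster, is the delicate point, and the strict positivity $\rho>0$ a.e. is exactly what prevents mass from being created there.
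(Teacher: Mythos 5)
Your overall strategy --- introducing a stream function $U$ for the divergence-free field $(u,ub)$ with $\d_x U = u$, $\d_t U = -ub$, normalising $U \equiv 0$ for $t\le 0$ (which correctly encodes $\bar u \equiv 0$, mirroring the paper's even reflection of $H$ across $t=0$), and concluding once $U$ is shown to be constant along level sets of $H$ --- is a genuinely different route from the paper's, which never introduces a potential for $u$ and works directly with the weak formulation. But the step you yourself flag as the main obstacle, namely $\d_t w = 0$ for $w(t,y) = U(t,X(t,y))$, is a genuine gap and not a removable technicality in the regime of this theorem. Since $\rho$ is only in $L^\infty_\loc$ with $\rho>0$ a.e.\ (no lower bound), the inverse $X(t,\cdot)=H(t,\cdot)^{-1}$ is merely a continuous, strictly increasing map whose a.e.\ derivative $1/\rho(t,X(t,y))$ is unbounded; in particular $X$ is not Lipschitz in $y$, Rademacher's theorem does not apply to $(t,y)\mapsto X(t,y)$, and $w$ is a composition of a function that is only $W^{1,1}_\loc$ with a non-Lipschitz homeomorphism --- such compositions need not lie in $W^{1,1}_\loc$ at all, so the formal cancellation $\d_t w = \d_t U + \d_x U\,\d_t X = -ub + ub$ has no distributional meaning to start from. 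Even the ODE $\d_t X = b(t,X)$ is unproven here: the paper establishes the analogous statement (Lemma~\ref{l-level-sets}) only under the two-sided bound $C_1\le\rho\le C_2$ of near incompressibility, which Theorem~\ref{th-L1-unique} does not assume.

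The way the paper circumvents exactly this difficulty is what your proposed ``regularization with stable a priori bounds'' would have to become. Instead of changing variables in the solution, one mollifies the Hamiltonian: extend $(\rho,\rho b)$ evenly/oddly across $t=0$, set $H_\eps := H * \omega_\eps$, so that $\d_x H_\eps = \tilde\rho_\eps > 0$ everywhere and $H_\eps$ is smooth, and then use $\varphi_\eps := f(H_\eps)$, $f\in C^\infty_c$, as test functions in the weak formulation. All compositions then involve only smooth maps, the support of $\varphi_\eps$ is kept compact by the cone property of the level sets of $H_\eps$, and the transport term $f'(H_\eps)\bigl(-(\tilde\rho\tilde b)_\eps + b\,\tilde\rho_\eps\bigr)$ tends to $0$ a.e.; dominated convergence --- this is precisely where $\rho\in L^\infty_\loc$ and $u\in L^1_\loc$ enter --- yields $\int u(\tau,x)f(H(\tau,x))\,dx = 0$ for a.e.\ $\tau$ and all $f$, hence $u(\tau,\cdot)=0$. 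In short: put the straightening on the test-function side, where smoothness comes for free from mollification, rather than on the solution side, where it is unavailable. If you rework your argument this way, it merges with the paper's proof and the gap disappears; as written, the central step of your proposal remains unjustified.
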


\begin{proof}

\emph{Step 1.} 
Let $H\in \Lip([0,T]\times \R)$ be a Hamiltonian associated with $(\rho, b)$.
We would like to use test functions of the form $\varphi(t,x) := f(H(t,x))$ in the districutional formulation of \eqref{Cauchy-problem}, where $f\in C^\infty_c(\R)$. In general such functions could be not compactly supported, therefore we apply an approximation argument.


For any $(t,x)\in(-T,0)\times \R$ let $H(t,x):=H(-t,x)$. 
Clearly $\d_x H = \tilde \rho$ and $\d_t H = \tilde \rho \tilde b$ in $\ss D'((-T,T)\times \R)$, where
\begin{equation*}
\tilde \rho(t,x) := 
\begin{cases}
\rho(t,x), & t > 0, \\
\rho(-t,x), & t<0;
\end{cases}
\qquad
\tilde b(t,x) := 
\begin{cases}
b(t,x), & t > 0, \\
-b(-t,x), & t<0.
\end{cases}
\end{equation*}

Let $\eps > 0$ and let $\omega_\eps(z):= \eps^{-2}\omega(\eps^{-2}z)$,
where $\omega\in C_c^\infty(\R^2)$ is the standard mollification kernel.
Let $H_\eps := H * \omega_\eps$, where $*$ denotes the convolution. Clearly 
\begin{equation}\label{e-dxHeps}
\d_x H_\eps = \tilde \rho_\eps \quad \text{and}\quad \d_t H_\eps = - (\tilde \rho \tilde b)_\eps.
\end{equation}
Hence for any $t\in (-T+\eps,T-\eps)$ the function $H_\eps(t,\cdot)$ is strictly increasing.

\emph{Step 2.}
Let $h\in \R$ be such that the level set $L_{\eps, h} := \{(t,x)\in (-T+\eps,T-\eps)\times \R : H_\eps(t,x)=h\}$ is not empty.

Suppose that $\tau,\xi \in \R$ and $\tau^2 + \xi^2 = 1$. If $|\xi| > \|b\|_\infty |\tau|$ then the derivative of $H$ in the direction $\nu:=(\tau,\xi)$ satisfies
\begin{equation*}
\d_\nu H = \tau \d_t H + \xi \d_x H = -\tau (\tilde \rho \tilde b)_\eps + \xi \tilde \rho_\eps 
\ge (\xi - \tau \|b\|_\infty) \rho_\eps > 0,
\end{equation*}
therefore for any $(t,x)\in L_{\eps, h}$ the level set $L_{\eps, h}$ is contained in some cone:
\begin{equation}\label{e-level-set-in-cone}
L_{\eps, h} \subset \{(t',x') : (x'-x) \le \|b_\infty\| (t' - t)\}.
\end{equation}
Consequently $L_{\eps, h}$ is a bounded subset of $(-T+\eps,T-\eps)\times \R$.

Fix $(t,x)\in L_{\eps, h}$. Since $\d_x H_\eps = \tilde \rho_\eps > 0$, by Implicit Function Theorem
the level set $L_{\eps, h}$ in some neighborhood $U=(t-\delta, t+\delta)\times (x-\delta, x+\delta)$ of $(t,x)$ 
can be represented as a graph of a smooth function $\tau\mapsto Y_\eps(\tau,h)$:
\begin{equation}\label{e-implicit1}
L_{\eps, h} \cap U = \{(\tau, Y_\eps(\tau,h)) \;|\; \tau \in (t-\delta, t+\delta)\}.
\end{equation}
Moreover,
\begin{equation}\label{e-implicit2}
\d_\tau Y_\eps(\tau, h) = - \left.\frac{(\d_t H_\eps)(\tau, x)}{(\d_x H_\eps)(\tau, x)}\right|_{x=Y_\eps(\tau, h)}
\quad \stackrel{\eqref{e-dxHeps}}{\Rightarrow} \quad
|\d_\tau Y_\eps| \le \frac{|(\tilde \rho \tilde b)_\eps|}{|\tilde \rho_\eps|} \le \|b\|_\infty.
\end{equation}

Let $P_{\eps,h} := \pi_t (L_{\eps, h})$, where $\pi_t (\tau, x) := \tau$ is the projection on the $t$-axis.
By \eqref{e-implicit1} $P_{\eps,h}$ is open. On the other hand $P_{\eps,h}$ is closed in $(-T+\eps, T-\eps)$, since $L_{\eps, h} = H_\eps^{-1}(h)$ is closed in $(-T+\eps,T-\eps)\times \R$. Therefore $P_{\eps,h} = (-T+\eps, T-\eps)$.



The image $R_\eps:=H_\eps((-T+\eps, T-\eps)\times \R) \subset \R$ is connected, since $H_\eps$ is continuous and $(-T+\eps, T-\eps)\times \R$ is connected. Moreover, since for any $t\in (-T+\eps, T-\eps)$ the function $x \mapsto H(t,x)$ is strictly increasing and continuous, the images $H(t, \R)$ are open and hence
\begin{equation*}
R_\eps = \cup_{t\in (-T+\eps, T-\eps)} H_\eps(t, \R)
\end{equation*}
is open. Therefore $R_\eps$ is an open interval.

We have thus proved that for any $h\in R_\eps$ the level set $L_{\eps, h}$ can be \emph{globally}
represented as a graph of a smooth function $\tau \mapsto Y_\eps(\tau, h)$, where $\tau \in (-T+\eps, T-\eps)$ and moreover $|\d_\tau Y_\eps| \le \|b\|_\infty$ by \eqref{e-implicit2}.






\emph{Step 3.}
Using Fubini's theorem and the distributional formulation of \eqref{Cauchy-problem} one can show that
there exists a Lebesgue-negligible set $N\subset (0,T)$ such that for any $\tau\in (0,T) \setminus N$
the function $x\mapsto \rho(t,x)$ is strictly positive for a.e. $x$ and for all $\varphi \in \Lip_c([0,\tau]\times R)$ it holds that
\begin{equation}\label{e-distrib-Cauchy}
\int_\R u(\tau,x) \varphi(\tau, x) \, dx - \int_\R \bar u(x) \varphi(0, x) \, dx
= \int_0^\tau \int_\R u \cdot (\d_t \varphi + b \d_x \varphi) \, dx \, dt
\end{equation}

Let us fix $\tau \in (0,T) \setminus N$ and consider $\eps \in (0, T-\tau)$. By \eqref{e-dxH} the function $x \mapsto H(\tau, x)$ is strictly increasing and continuous. Hence the image $I_\tau := H(\tau, \R)$ is a nonempty open interval. 

Consider $f\in C^\infty_c(I_\tau)$ and let $\varphi_\eps(t,x):= f(H_\eps(t,x))$.
We claim that there exists $\eps_1>0$ and a compact $K\subset [0,\tau]\times \R$ such that
\begin{equation*}
\supp \varphi_\eps \subset K
\end{equation*}
for any $\eps \in (0, \eps_1)$.

Indeed, the support of $f$ is contained in some finite interval $(\alpha, \beta)$ such that $[\alpha, \beta] \subset I_\tau$.
Let us fix $\alpha_1 \in I_\tau \setminus [\alpha, +\infty)$ and $\beta_1 \in I_\tau \setminus (-\infty, \beta]$.
By definition of $I_\tau$ there exist $x_1$ and $y_1$ such that $H(t,x_1) = \alpha_1$ and $H(t,y_1) = \beta_1$.
Since $H_\eps(t,x_1) \to H(t,x_1)$ and $H_\eps(t,y_1)\to H(t,y_1)$ as $\eps \to 0$ we can find $\eps_0 > 0$
such that $R_\eps \supset (\alpha, \beta)$ for any $\eps \in (0,\eps_0)$.

Since $x\mapsto H_\eps(\tau, x)$ is strictly monotone and continuous, there exist unique $x_0$ and $y_0$
such that $H(x_0,\tau) = \alpha$ and $H(y_0, \tau) = \beta$
Since the support of $f$ is a compact subset of $(\alpha,\beta)$ and $H_\eps(x_0,\tau) \to \alpha$ and $H_\eps(y_0, \tau) \to \beta$ as $\eps \to 0$, there exists $\eps_0 >0$ such that 
\begin{equation*}
\supp f \subset (H_\eps(x_0, \tau), H_\eps(y_0, \tau))
\end{equation*}
whenever $\eps \in (0,\eps_0)$.
Hence the support of $\varphi_\eps$ (restricted to $[0,\tau]\times \R$) is confined by the level sets of $H_\eps$, passing through $x_0$ and $y_0$:
\begin{equation*}
\supp \varphi_\eps \subset \{(t,x) \;|\; t\in [0,\tau], \; x\in [Y_\eps(t, H_\eps(\tau, x_0)), Y_\eps(t, H_\eps(\tau, y_0))]\}
\stackrel{\eqref{e-implicit2}}{\subset} K,
\end{equation*}
where
\begin{equation*}
K:=\{(t,x) \;|\; t \in [0,\tau], x \in [x_0 - \|b\|_\infty (\tau - t), y_0 + \|b\|_\infty (\tau - t)]\}.
\end{equation*}

\emph{Step 4.}
Now we are in a position to use $\varphi_\eps$ as a test function in \eqref{e-distrib-Cauchy}.
First we observe that
\begin{equation*}
\d_t \varphi_\eps + b \d_x \varphi_\eps = f'(H_\eps(t,x)) (\d_t H_\eps + b \d_x H_\eps) 
= f'(H_\eps(t,x)) (- (\tilde\rho \tilde b)_\eps + b \tilde \rho_\eps) \to 0
\end{equation*}
a.e. on $(0,\tau)\times \R$ as $\eps \to 0$. Since $\bar u \equiv 0$, by \eqref{e-distrib-Cauchy} and Lebesgue's dominated convergence theorem
\begin{equation}\label{e-limit}
\int_\R u(\tau,x) \varphi_\eps(\tau, x) \, dx
= \int \int_K u \cdot (\d_t \varphi_\eps + b \d_x \varphi_\eps) \, dx \, dt \to 0
\end{equation}
as $\eps \to 0$. (Indeed, $|u \cdot (\d_t \varphi_\eps + b \d_x \varphi_\eps)| \le \|f\|_{C^1} \|\rho\|_{L^\infty(K)} (1 + \|b\|_\infty) |u| \in L^1(K)$.)
Since $H_\eps(\tau, \cdot) \to H(\tau, \cdot)$ uniformly on $[x_0, y_0]$, the left-hand side of the equality above converges to $\int_\R u(\tau, x) f(H(t,x)) \, dx$. We have thus proved that
\begin{equation}\label{e-varid}
\int u(\tau,x) f(H(\tau, x)) \, dx = 0
\end{equation}
for all $f \in C^1_c(I_\tau)$. Approximating $f\in C_c(I_\tau)$ with a sequence of functions from $C^1_c(I_\tau)$ it is easy to see that \eqref{e-varid} holds for any $f \in C_c(I_\tau)$.

Fix $\psi \in C_c(\R)$. Since $x \mapsto H(\tau, x)$ is strictly monotone and continuous, it has a continuous inverse, and therefore we can find $f\in C_c(I_\tau)$ such that $\psi(x) = f(H(\tau,x))$ for all $x\in \R$. Therefore by \eqref{e-varid}
\begin{equation}
\int u(\tau, x) \psi(x) \, dx = 0
\end{equation} 
for any $\psi \in C_c(\R)$. Hence $u(\tau, \cdot) \equiv 0$. Since this argument is valid for any $\tau\in (0,T) \setminus N$, we conclude that $u(\tau, \cdot) = 0$ a.e. for a.e. $\tau\in I$.
\end{proof}

From the proof above one can also deduce the following result:

\begin{theorem}
Suppose that a vector field $b\in L^\infty(I\times \R; \R)$ admits a density $\rho \in L^1_\loc(I\times \R; \R)$ such that $\rho(t,x)>0$ for a.e. $(t,x) \in I\times \R$.
If $u\in L^\infty_\loc(I\times \R; \R)$ is a weak solution of
\eqref{Cauchy-problem} with $\bar u \equiv 0$ then $u(t,x)=0$ for a.e. $(t,x)\in I \times \R$.
\end{theorem}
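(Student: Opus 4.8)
The plan is to repeat, almost verbatim, the argument proving Theorem~\ref{th-L1-unique}, tracking the two places where the hypotheses have changed: the density is now only locally integrable (so the associated Hamiltonian $H$ is no longer Lipschitz, merely $H\in W^{1,1}_\loc(I\times\R)$, with $\d_x H=\rho>0$ and $|\d_t H|\le\|b\|_\infty\,\d_x H$ a.e.), whereas the solution $u$ is now locally bounded. First I would observe that Steps~1--2 carry over without change. Indeed, the mollified Hamiltonian $H_\eps=H*\omega_\eps$ is smooth and still satisfies $\d_x H_\eps=\tilde\rho_\eps>0$ together with the pointwise bound $|(\tilde\rho\tilde b)_\eps|\le\|b\|_\infty\tilde\rho_\eps$, and these are the only facts used to obtain the cone confinement \eqref{e-level-set-in-cone}, the global graph representation $\tau\mapsto Y_\eps(\tau,h)$, and the slope bound $|\d_\tau Y_\eps|\le\|b\|_\infty$ in \eqref{e-implicit2}. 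None of this sees the integrability class of $\rho$.

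The modifications occur in Steps~3--4. In Step~3 the distributional identity \eqref{e-distrib-Cauchy} and the a.e.\ positivity of $x\mapsto\rho(\tau,x)$ hold for a.e.\ $\tau$ exactly as before (Fubini, plus the time-regularity coming from $\d_t u=-\d_x(ub)$ with $ub\in L^\infty_\loc$). What needs care is that $H$ is no longer globally continuous. I would therefore discard a further null set of $\tau$ to ensure that, for the remaining $\tau$, the slice $H(\tau,\cdot)\in W^{1,1}_\loc(\R)$ is continuous and strictly increasing (Sobolev slicing together with $\d_x H(\tau,\cdot)=\rho(\tau,\cdot)>0$ a.e.), and that along some subsequence $\eps_k\downarrow0$ one has $H_{\eps_k}(\tau,\cdot)\to H(\tau,\cdot)$ uniformly on compacts. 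The latter holds because $H_\eps\to H$ and $\d_x H_\eps\to\d_x H$ in $L^1_\loc$, so that $\int_0^T\|H_\eps(\tau,\cdot)-H(\tau,\cdot)\|_{W^{1,1}(J)}\,d\tau\to0$ for every compact $J$; hence a subsequence converges in $W^{1,1}(J)$, and thus uniformly, for a.e.\ $\tau$, a diagonal argument over an exhaustion of $\R$ fixing a single subsequence. With slice continuity and uniform slice convergence in hand, the support control of $\varphi_\eps=f(H_\eps)$ is obtained exactly as in the Lipschitz case, yielding the same compact $K\subset[0,\tau]\times\R$.

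Finally, in Step~4 the dominated-convergence step must be recast, since $\rho$ is no longer bounded on $K$. Here I would exploit the new boundedness of $u$: on $K$,
\[
\Bigl|\int\!\!\int_K u\,(\d_t\varphi_\eps+b\,\d_x\varphi_\eps)\Bigr|
\le \|u\|_{L^\infty(K)}\,\|f'\|_\infty\int\!\!\int_K\bigl|{-}(\tilde\rho\tilde b)_\eps+b\,\tilde\rho_\eps\bigr|\,dx\,dt,
\]
and the last integrand tends to $0$ in $L^1(K)$ because $\tilde\rho_\eps\to\tilde\rho$ and $(\tilde\rho\tilde b)_\eps\to\tilde\rho\tilde b$ in $L^1_\loc$, $b$ is bounded, and $b\tilde\rho=\tilde\rho\tilde b$ on $K$; hence the right-hand side of \eqref{e-limit} vanishes. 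For the left-hand side, uniform slice convergence gives $f(H_{\eps_k}(\tau,\cdot))\to f(H(\tau,\cdot))$ uniformly on $[x_0,y_0]$, and since $u(\tau,\cdot)\in L^\infty$ we may pass to the limit to recover \eqref{e-varid}. The change of variables $\psi=f\circ H(\tau,\cdot)$ through the continuous strictly increasing map $H(\tau,\cdot)$ then forces $u(\tau,\cdot)\equiv0$ for a.e.\ $\tau$. The main obstacle is precisely this loss of Lipschitz regularity of $H$: the cone and graph estimates survive because they live entirely at the mollified level, but the fixed-time statements (continuity and strict monotonicity of $H(\tau,\cdot)$, and the uniform slice convergence $H_{\eps_k}(\tau,\cdot)\to H(\tau,\cdot)$) must be recovered for a.e.\ $\tau$ via Sobolev slicing rather than read off from global continuity.
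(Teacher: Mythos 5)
Your proposal is correct and follows essentially the same route as the paper: the paper's proof of this theorem literally repeats the proof of Theorem~\ref{th-L1-unique}, changing only the passage to the limit in \eqref{e-limit}, where (exactly as you do) the bound $\|u\|_{L^\infty(K)}\bigl(\|(\tilde\rho\tilde b)_\eps-\tilde\rho\tilde b\|_{L^1(K)}+\|b\tilde\rho_\eps-\tilde\rho\tilde b\|_{L^1(K)}\bigr)\to 0$ replaces the dominated convergence argument. Your additional care with Sobolev slicing (continuity and strict monotonicity of $H(\tau,\cdot)$ and uniform slice convergence $H_{\eps_k}(\tau,\cdot)\to H(\tau,\cdot)$ for a.e.\ $\tau$, since $H$ is no longer Lipschitz) is a legitimate and welcome filling-in of details that the paper leaves implicit under ``the proof repeats.''
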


\begin{proof}[The proof repeats the proof of Theorem~\ref{th-L1-unique}.]
Only when passing to the limit in \eqref{e-limit} we have to argue slightly differently.
Namely, since $\tilde \rho \in L^1_\loc([-T,T]\times \R)$ it follows that
\begin{equation*}
\begin{aligned}
&\|u \cdot (\d_t \varphi_\eps + v \d_x \varphi_\eps)\|_{L^1(K)}
\le \|u\|_{L^\infty(K)} \cdot \|-(\tilde \rho \tilde b)_\eps + b \tilde \rho_\eps\|
\\
&\le \|u\|_{L^\infty(K)} \cdot \bigl( \|-(\tilde \rho \tilde b)_\eps + \tilde \rho \tilde b\|_{L^1(K)} + \|-\tilde \rho \tilde b + b \tilde \rho_\eps\|_{L^1(K)} \bigr) \to 0
\end{aligned}
\end{equation*}
as $\eps \to 0$.
\end{proof}







\section{Lagrangian flows and existence of weak solutions}

Suppose that $b\in L^\infty(I\times \R; \R)$ is a nearly incompressible vector field with density $\rho \in L^\infty(I\times\R; \R)$. Let $H\in \Lip([0,T]\times \R)$ be a Hamiltonian associated with $(\rho,b)$. 

By \eqref{e-dxH} and Fubini's theorem for a.e. $t\in I$ for all $x,y \in \R$ such that $x<y$ it holds that
\begin{equation}\label{e-H-difference1}
C_1 (y-x) \le H(t,y) - H(t,x) \le C_2 (y-x),
\end{equation}
where $C_1, C_2$ are the constants from Definition~\ref{def-ni}.
By continuity of $H$ \eqref{e-H-difference1} holds for all $t\in \bar I$.
Hence for any $t\in \bar I$ the function $x\mapsto H(t,x)$ is strictly increasing and \emph{bilipschitz}.
Consequently, for any $h \in \R$ there exists unique $Y(t,h)\in \R$ such that $H(t,Y(t,h)) = h$.

By \eqref{e-H-difference1} for any $t\in [0,T]$ there exists a function $\rho_t \in L^\infty$ such that $C_1 \le \rho_t \le C_2$ a.e. and
\begin{equation*}
\d_x H(t,x) = \rho_t(x)
\end{equation*}
in $\ss D'(\R)$. Note that by continuity of $H$ the function $I\ni t \mapsto \rho_t \in L^\infty(\R)$ is $*$-weak continuous
and therefore $\rho$ solves the Cauchy problem for the continuity equation \eqref{Cauchy-problem} with the initial data $\rho_0$. In view of \eqref{e-dxH} for a.e. $t\in I$ we have $\rho(t,x) = \rho_t(x)$ for a.e. $x$.
Since we can always redefine $\rho$ on a negligible set, for convenience we will
assume that the last equality holds for \emph{all} $t\in [0,T]$.

\begin{lemma}\label{l-level-sets}
The function $Y$ is Lipschitz continuous on $[0,T]\times \R$.
Moreover, there exists a negligible set $M\subset \R$ such that for all $h\in \R \setminus M$
\begin{equation}\label{e-ODE}
\d_t Y(t,h) = b(t,Y(t,h))
\end{equation}
in $\ss D'(I)$.
Finally, for all $t\in [0,T]$
\begin{equation}\label{e-Y-pushforward}
Y(t,\cdot)_\# \Le = \rho(t, \cdot) \Le.
\end{equation}
\end{lemma}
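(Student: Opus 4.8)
The plan is to realize $Y(t,\cdot)$ as the spatial inverse of the bilipschitz map $H(t,\cdot)$ and to exploit the two-sided bound \eqref{e-H-difference1} throughout. The three assertions are then handled separately: the Lipschitz regularity of $Y$ and the pushforward identity \eqref{e-Y-pushforward} follow from elementary estimates and the one-dimensional change of variables for monotone Lipschitz maps, whereas the transport identity \eqref{e-ODE} requires an approximation argument based on the mollified Hamiltonian $H_\eps$ introduced in Step~2 of the proof of Theorem~\ref{th-L1-unique}.

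For the Lipschitz regularity I would bound the increments of $Y$ in each variable. In $h$: for fixed $t$ and $h_1<h_2$, writing $x_i:=Y(t,h_i)$ and using the lower bound in \eqref{e-H-difference1} gives $h_2-h_1=H(t,x_2)-H(t,x_1)\ge C_1(x_2-x_1)$, whence $|Y(t,h_2)-Y(t,h_1)|\le C_1^{-1}|h_2-h_1|$. In $t$: since $\d_t H=-\rho b$ with $\|\rho b\|_\infty\le C_2\|b\|_\infty$, the map $H$ is Lipschitz in time, so for $x:=Y(t_1,h)$ we have $C_1|Y(t_2,h)-Y(t_1,h)|\le |H(t_2,Y(t_2,h))-H(t_2,x)|=|H(t_1,x)-H(t_2,x)|\le C_2\|b\|_\infty|t_2-t_1|$, using $H(t_i,Y(t_i,h))=h$. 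Separate Lipschitz continuity in each variable with uniform constants yields joint Lipschitz continuity on $[0,T]\times\R$.

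The pushforward identity is then immediate. For each $t\in[0,T]$ the map $x\mapsto h=H(t,x)$ is an increasing bilipschitz bijection of $\R$ with $\d_x H(t,\cdot)=\rho(t,\cdot)$, so the change-of-variables formula for monotone Lipschitz functions gives, for every $g\in C_c(\R)$, $\int_\R g(Y(t,h))\,dh=\int_\R g(Y(t,H(t,x)))\,\d_x H(t,x)\,dx=\int_\R g(x)\,\rho(t,x)\,dx$, which is exactly \eqref{e-Y-pushforward}.

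The transport identity \eqref{e-ODE} is the crux, and the main obstacle is passing to the limit in a nonlinear composition. I would invert the strictly increasing maps $H_\eps(t,\cdot)$ to obtain the smooth level curves $h\mapsto Y_\eps(t,h)$, so that the implicit-function formula \eqref{e-implicit2} reads $\d_t Y_\eps=(\tilde\rho\tilde b)_\eps/\tilde\rho_\eps$ evaluated at $(t,Y_\eps(t,h))$. Since $H_\eps\to H$ locally uniformly and $\d_x H_\eps=\tilde\rho_\eps\in[C_1,C_2]$, the inverses converge, $Y_\eps\to Y$ locally uniformly. To identify the limit of $\d_t Y_\eps$ I would test against $\psi\in C_c^\infty(I\times\R)$ and perform, for each fixed $t$, the substitution $x=Y_\eps(t,h)$, $dh=\tilde\rho_\eps(t,x)\,dx$; the Jacobian cancels the denominator and turns $\int\!\!\int\d_t Y_\eps\,\psi\,dh\,dt$ into $\int\!\!\int(\tilde\rho\tilde b)_\eps(t,x)\,\psi(t,H_\eps(t,x))\,dx\,dt$. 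Now the nonlinearity is gone: $(\tilde\rho\tilde b)_\eps\to\tilde\rho\tilde b$ strongly in $L^1_\loc$ while $\psi(\cdot,H_\eps)\to\psi(\cdot,H)$ uniformly and boundedly on the (compact) region that matters, so the integral converges to $\int\!\!\int\rho b\,\psi(t,H(t,x))\,dx\,dt$, which by the inverse substitution equals $\int\!\!\int b(t,Y(t,h))\,\psi\,dh\,dt$. On the other hand $\int\!\!\int\d_t Y_\eps\,\psi\,dh\,dt=-\int\!\!\int Y_\eps\,\d_t\psi\,dh\,dt\to-\int\!\!\int Y\,\d_t\psi\,dh\,dt=\int\!\!\int\d_t Y\,\psi\,dh\,dt$, using $Y_\eps\to Y$ in $L^1_\loc$ and the absolute continuity of $Y$ in $t$. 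Comparing the two limits gives $\d_t Y=b(\cdot,Y)$ as elements of $L^1_\loc(I\times\R)$; by Fubini this yields the distributional ODE \eqref{e-ODE} for all $h$ outside a negligible set $M$. (That $b\circ Y$ is well-defined a.e. follows from the fact that $(t,h)\mapsto(t,Y(t,h))$ is a bilipschitz homeomorphism, hence maps null sets to null sets in both directions.)
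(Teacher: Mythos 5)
Your proof is correct, but the key step — the ODE \eqref{e-ODE} — is proved by a genuinely different method than the paper's. The paper's proof is much shorter there: it invokes Rademacher's theorem to differentiate the Lipschitz functions $H$ and $Y$ a.e., applies the chain rule to the identities $H(t,Y(t,h))=h$ and $\d_h H(t,Y(t,h))=1$, and divides by $\rho(t,Y(t,h))>0$ to get $\d_t Y=b(t,Y)$ and $\rho(t,Y)\,\d_h Y=1$ a.e.; the pushforward \eqref{e-Y-pushforward} then follows from the area formula applied to $Y(t,\cdot)$. You instead mollify the Hamiltonian, use the smooth inverses $Y_\eps$ and the implicit-function formula \eqref{e-implicit2}, and pass to the limit in the weak formulation after a change of variables that cancels the Jacobian — longer, but it avoids the delicate point of applying the a.e.\ chain rule to a composition of merely Lipschitz maps. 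Notably, your closing parenthetical (the map $(t,h)\mapsto(t,Y(t,h))$ is bilipschitz, hence preserves null sets in both directions) is exactly the observation needed to make the paper's terser chain-rule computation rigorous, so your argument in effect supplies the justification the paper leaves implicit. Your treatment of \eqref{e-Y-pushforward} by substituting $h=H(t,x)$ rather than $x=Y(t,h)$ is essentially the same one-dimensional area formula read in the opposite direction, with the small advantage that it works for every fixed $t\in[0,T]$ directly, without relying on the a.e.-in-$(t,h)$ identity $\rho(t,Y)\d_h Y=1$. Two minor remarks: for Lipschitz continuity in $t$ the paper uses the cone containment of level sets coming from \eqref{e-H-difference}, which yields the sharp constant $\|b\|_\infty$, whereas your estimate via $|\d_t H|\le C_2\|b\|_\infty$ gives the constant $(C_2/C_1)\|b\|_\infty$ — perfectly sufficient for the lemma, though the sharper constant is the one quoted later in the proof of Theorem~\ref{th-compactness}; and when you invert $H_\eps(t,\cdot)$ globally you should note (as is immediate from $\d_x H_\eps=\tilde\rho_\eps\ge C_1$) that in the setting of this section the range of $H_\eps(t,\cdot)$ is all of $\R$, so the smooth inverse is defined wherever your test function lives.
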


Here $f_\# \mu$ denotes the image of the measure $\mu$ under the map $f$ and $\Le$ denotes the Lebesgue measure (we use the notation from \cite{AFP}).

\begin{proof}

By \eqref{e-H-difference1} for any $h, h' \in \R$ it holds that
\begin{equation*}
C_1 |Y(t,h) - Y(t,h')| \le |H(t,Y(t,h)) - H(t,Y(t,h'))| = |h-h'|
\end{equation*}
hence the function $h\mapsto Y(t,h)$ is Lipschitz continuous with Lipschitz constant $1/C_1$.

Fix $(t,x) \in I\times \R$.
In view of \eqref{e-dxH} and Fubini's theorem for a.e. $(t',x') \in I \times \R$ such that $|x'-x| > \|b\|_\infty |t'-t|$
it holds that
\begin{equation}\label{e-H-difference}
|H(t',x') - H(t,x)| \ge C_1 (|x'-x| - \|b\|_\infty |t'-t|).
\end{equation}
By continuity of $H$, \eqref{e-H-difference} holds for \emph{all} $(t',x')\in I\times \R$.
Hence for any $h\in \R$ and any $(t,x)\in H^{-1}(h)$ the level set $H^{-1}(h)$ is contained in a cone:
\begin{equation}
H^{-1}(h) \subset \{(t',x')\in I \times \R : |x'-x| \le \|b\|_\infty |t'-t|\},
\end{equation}
therefore for any $h\in \R$ the function $t \mapsto Y(t,h)$ is Lischitz continuous with Lipschitz constant $\|b\|_\infty$.

In view of Rademacher's theorem the functions $H$ and $Y$ are differentiable a.e. on $I\times \R$.
Hence by chain rule and taking into account \eqref{e-dxH}
we obtain
\begin{equation*}
\begin{aligned}
0 &= \d_t h = \d_t H(t, Y(t,h)) = \d_t H(t, Y(t,h)) + \d_x H(t,Y(t,h)) \d_t Y(t,h)\\
&= - \rho(t,Y(t,h)) b(t,Y(t,h)) + \rho(t,Y(t,h)) \d_t Y(t,h). 
\end{aligned}
\end{equation*}
and
\begin{equation*}
\begin{aligned}
1 &= \d_h h = \d_h H(t, Y(t,h)) = \d_x H(t,Y(t,h)) \d_h Y(t,h)\\
&= \rho(t,Y(t,h)) \d_h Y(t,h)
\end{aligned}
\end{equation*}
for a.e. $(t,h)\in I \times \R$.
Hence \eqref{e-ODE} holds and moreover for any $\varphi \in C_c(\R)$ 
\begin{equation*}
\begin{aligned}
\int \varphi \, dY(t,\cdot)_\# \Le &= \int \varphi(Y(t,h)) \, dh \\
&=\int \varphi(Y(t,h)) \rho(t,Y(t,h)) \d_h Y(t,h) \, dh 
= \int \varphi (y) \rho(t,y) \, dy
\end{aligned}
\end{equation*}
(by Area formula, see e.g. \cite{AFP}). Thus \eqref{e-Y-pushforward} is proved.
\end{proof}

We define the \emph{flow} $X$ of $b$ as
\begin{equation}\label{e-X-def}
X(t,x) := Y(t, H(0,x)).
\end{equation}
Note that $X$ is independent of the additive constant in the definition of $H$.
In order to show that $X$ is independent of the choice of $\rho$ we recall the definition of regular Lagrangian flow (see \cite{CrippaThesis}) and the corresponding uniqueness result:

\begin{definition}\label{d-rlf}
Let $b\colon [0,T]\times \R^d \to \R^d$ be a bounded measurable vector field.
We say that a map $X\colon [0,T]\times \R^d \to \R^d$ is a regular Lagrangian flow relative to $b$ if
\begin{enumerate}
\item for $\Le^d$-a.e. $x\in \R^d$ the map $t \mapsto X(t,x)$ is an absolutely continuous integral solution of $\dot \gamma(t) = b(t,\gamma(t))$ for $t\in [0,T]$ with $\gamma(0)=x$;
\item there exists a constant $L>0$ independent of $t$ such that $X(t,\cdot)_\# \Le^d \le L \Le^d$.
\end{enumerate}
\end{definition}

\begin{proposition}[see \cite{CrippaThesis}, Theorem 6.4.1]\label{p-regular-lagrangian-flow}
Let $b\colon [0,T]\times \R^d \to \R^d$ be a bounded measurable vector field.
Assume that the only weak solution $u\in L^\infty(I\times \R^d)$ of \eqref{Cauchy-problem} with $\bar u = 0$ is $u =0$.
Then the regular Lagrangian flow relative to $b$, if it exists, is unique. Assume in addition that \eqref{Cauchy-problem}
with $\bar u = 1$ has a positive solution $u\in L^\infty(I\times \R^d)$. Then we have existence of a regular Lagrangian flow relative to $b$.
\end{proposition}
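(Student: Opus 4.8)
The plan is to split the statement into two parts: uniqueness (under the assumption that the continuity equation has trivial kernel) and existence (under the additional assumption that the constant function solves the problem with a positive density). I would treat these separately since they rely on different mechanisms.

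For \emph{uniqueness}, suppose $X$ and $\bar X$ are two regular Lagrangian flows relative to $b$. The standard strategy is to associate to each flow a bounded weak solution of the continuity equation and then invoke the hypothesis. First I would show that for any $\bar u \in L^\infty(\R^d)$, the push-forward $u(t,\cdot) := X(t,\cdot)_\# (\bar u \Le^d)$ (made absolutely continuous using property~(2) of Definition~\ref{d-rlf}) defines a bounded weak solution of \eqref{Cauchy-problem} with initial datum $\bar u$; this is a direct computation using property~(1), testing against $\varphi(t,X(t,x))$ and differentiating in $t$ along almost every integral curve. Doing the same for $\bar X$ yields a second solution $\bar u(t,\cdot)$ with the \emph{same} initial datum. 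Their difference is a bounded weak solution with $\bar u = 0$, so by hypothesis it vanishes, giving $X(t,\cdot)_\# (\bar u \Le^d) = \bar X(t,\cdot)_\# (\bar u \Le^d)$ for every $\bar u$ and a.e.\ $t$. From the coincidence of all these push-forwards I would conclude $X(t,x) = \bar X(t,x)$ for a.e.\ $x$ and a.e.\ $t$, which is the asserted uniqueness.

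For \emph{existence}, I would run a smooth approximation scheme. Mollify $b$ to obtain $b_\eps \in C^\infty$ with $\|b_\eps\|_\infty \le \|b\|_\infty$, let $X_\eps$ be the classical flow of $b_\eps$ (which exists and is smooth by Cauchy--Lipschitz), and try to pass to the limit. The equi-Lipschitz-in-$t$ bound $|X_\eps(t,x)-X_\eps(s,x)| \le \|b\|_\infty |t-s|$ is immediate. The crucial point is a uniform bound on the push-forwards $X_\eps(t,\cdot)_\# \Le^d$: this is where the positive bounded solution $u$ of \eqref{Cauchy-problem} with $\bar u=1$ enters, since it provides, in the limit, the control $X(t,\cdot)_\# \Le^d \le L\, \Le^d$ needed for property~(2). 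I would extract (via a suitable compactness argument, e.g.\ convergence in measure or weak-$*$ convergence of the associated measures on trajectory space) a limit map $X$ and verify that almost every trajectory $t\mapsto X(t,x)$ is an integral solution of the ODE, using the convergence of $b_\eps$ to $b$ together with the uniform measure bound to control the set where $b_\eps$ and $b$ differ substantially.

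The main obstacle is the passage to the limit in the existence part, specifically establishing the uniform bound on $X_\eps(t,\cdot)_\# \Le^d$ and using it to show that the limiting curves genuinely solve $\dot\gamma = b(t,\gamma)$ rather than some averaged equation. Controlling $\int_0^T |b_\eps(t,X_\eps(t,x)) - b(t,X_\eps(t,x))|\,dt$ requires transferring the $L^1_\loc$ convergence $b_\eps \to b$ through the flow, and this transfer is exactly what the compressibility bound makes possible. Since the detailed construction is carried out in \cite{CrippaThesis}, I would present these steps as a verification that the hypotheses of that reference's Theorem~6.4.1 are met and refer there for the technical estimates, rather than reproduce the full approximation argument.
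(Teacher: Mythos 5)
First, a remark on the comparison itself: the paper does not prove this proposition at all --- it is imported verbatim from \cite{CrippaThesis} (Theorem 6.4.1) and used as a black box --- so your sketch can only be measured against the proof in that reference, not against anything internal to the paper.

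Your \emph{uniqueness} half is correct and is the standard argument: property (2) of Definition~\ref{d-rlf} makes $X(t,\cdot)_\#(\bar u\,\Le^d)$ absolutely continuous with density bounded by $L\|\bar u\|_\infty$, property (1) makes it a bounded weak solution of \eqref{Cauchy-problem} with datum $\bar u$, and the hypothesis annihilates the difference of the solutions produced by $X$ and $\bar X$. One step you gloss over deserves care: for a fixed $\bar u$ the push-forwards coincide only for a.e.\ $t$, with the exceptional set depending on $\bar u$, so to conclude $X=\bar X$ a.e.\ you must run the identity over a countable family of data $\bar u$ (dense in $L^1$) and a countable family of functions $\varphi\in C_c(\R^d)$ separating points, obtaining $\varphi(X(t,\cdot))=\varphi(\bar X(t,\cdot))$ a.e.\ for a.e.\ $t$. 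This is routine, but it is the entire content of the final implication, and without varying both $\bar u$ and $\varphi$ it is false that equal push-forwards force equal maps.

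The \emph{existence} half contains a genuine gap. The compressibility of the classical flow $X_\eps$ of a mollified field $b_\eps$ is governed by $\mathop{\mathrm{div}} b_\eps$, on which the hypotheses give no control whatsoever; and the positive bounded solution $u$ of \eqref{Cauchy-problem} with $\bar u=1$ is a solution of the equation for $b$, so it says nothing about the push-forwards $X_\eps(t,\cdot)_\#\Le^d$ of the flows of the \emph{different} fields $b_\eps$. The uniform bound you yourself identify as crucial is therefore not merely hard to establish along your scheme: it is simply absent, and with it goes any compactness of the family $X_\eps$. The actual proof in \cite{CrippaThesis} takes a different route, via the superposition principle: the nonnegative measure-valued solution $u(t,\cdot)\Le^d$ with $u(0,\cdot)=1$ is represented (after a localization, using that $b$ is bounded) as $(e_t)_\#\eta$, where $\eta$ is a measure on $C([0,T];\R^d)$ concentrated on integral curves of $b$ and $e_t(\gamma):=\gamma(t)$; the assumed uniqueness of bounded solutions then forces the disintegration of $\eta$ with respect to the initial point to consist of Dirac masses for a.e.\ $x$ (otherwise one splits $\eta$ into two parts with equal initial marginals whose marginals differ at some later time, producing two distinct bounded solutions with the same datum), and the map $X$ so obtained satisfies $X(t,\cdot)_\#\Le^d=u(t,\cdot)\Le^d\le\|u\|_\infty\Le^d$, which is exactly property (2). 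Finally, your fallback --- verifying ``the hypotheses of Theorem 6.4.1'' and citing it for the estimates --- is circular, because the statement to be proved \emph{is} Theorem 6.4.1; citing it is legitimate (it is precisely what the paper does), but it is not a proof.
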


By Lemma \ref{l-level-sets} the flow $X$ defined in \eqref{e-X-def} is a regular Lagrangian flow of $b$.
Indeed, by \ref{e-Y-pushforward}
\begin{equation}\label{e-X-pushforward}
X_\# (\rho(0,\cdot) \Le)  = Y(t,\cdot)_\# H(0,\cdot)_\# (\rho(0,\cdot)\Le) = Y(t,\cdot)_\# \Le = \rho(t,\cdot) \Le.
\end{equation}
Since Theorem~\ref{th-L1-unique} implies uniqueness of bounded weak solutions of \eqref{Cauchy-problem}, Proposition~\ref{p-regular-lagrangian-flow} immediately implies uniqueness of regular Lagrangian flow of $b$. Hence $X$ is independent of the choice of the density $\rho$.

\begin{theorem}\label{th-existence}
Let $b\in L^\infty(I\times \R; \R)$ be nearly incompressible with the density $\rho$.
Let $X$ be the flow of $b$. Then for any $\bar u\in L^1_\loc(\R)$
there exists a function $u\in L^1_\loc(I\times \R)$ such that for a.e. $t\in I$
\begin{equation*}
u(t,\cdot) \Le = X(t,\cdot)_\# (\bar u \Le)
\end{equation*}
and the function $u$ solves \eqref{Cauchy-problem}.
\end{theorem}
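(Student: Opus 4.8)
The plan is to construct $u$ explicitly via the push-forward formula and then verify it solves the continuity equation. First I would define, for each $t\in I$, the measure $\mu_t := X(t,\cdot)_\#(\bar u \Le)$. Since $\bar u\in L^1_\loc(\R)$ need not have a sign, I would decompose $\bar u = \bar u^+ - \bar u^-$ and treat the positive and negative parts separately, so that each push-forward is a genuine (non-negative) measure and linearity lets me recombine at the end. The crucial point is that $\mu_t$ is \emph{absolutely continuous} with a locally integrable density: this follows from \eqref{e-X-pushforward}, which exhibits $X(t,\cdot)_\#(\rho(0,\cdot)\Le) = \rho(t,\cdot)\Le$, together with the bilipschitz change of variables encoded in $Y$. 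Concretely, writing $\bar u = (\bar u/\rho(0,\cdot))\,\rho(0,\cdot)$ and using that $X(t,\cdot)$ transports $\rho(0,\cdot)\Le$ to $\rho(t,\cdot)\Le$, I would obtain
\begin{equation*}
u(t,y) = \frac{\bar u(X(t,\cdot)^{-1}(y))}{\rho(0, X(t,\cdot)^{-1}(y))}\,\rho(t,y),
\end{equation*}
where $X(t,\cdot)^{-1}$ is the inverse of the bilipschitz map $x\mapsto X(t,x)$ (its existence and bilipschitz character come directly from the bilipschitz property of $H(t,\cdot)$ established before Lemma~\ref{l-level-sets}). This gives $u(t,\cdot)\in L^1_\loc$ with a bound controlled by $\|1/\rho\|_\infty$, $\|\rho\|_\infty$ and the local $L^1$ norm of $\bar u$.

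Next I would establish the requisite measurability and integrability of $u$ on $I\times\R$ so that $u\in L^1_\loc(I\times\R)$. The Lipschitz continuity of $(t,h)\mapsto Y(t,h)$ from Lemma~\ref{l-level-sets} (hence joint continuity of $X$) ensures that $(t,x)\mapsto X(t,x)$ and its spatial inverse depend measurably on $t$, so the formula above defines a jointly measurable function. The local integrability in $(t,x)$ then follows from Tonelli's theorem applied to the uniform-in-$t$ bounds just derived, using near incompressibility to keep the densities pinched between $C_1$ and $C_2$.

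The main verification is that $u$ is a weak solution of \eqref{Cauchy-problem}. The natural strategy is to test against $\varphi\in C^1_c([0,T)\times\R)$ and use the Lagrangian representation: for a.e. $x$ the curve $t\mapsto X(t,x)$ solves $\dot\gamma = b(t,\gamma)$ by part (1) of Definition~\ref{d-rlf}, so along trajectories $\tfrac{d}{dt}\varphi(t,X(t,x)) = \d_t\varphi + b\,\d_x\varphi$ evaluated at $(t,X(t,x))$. Pushing forward and integrating against $\bar u\,\Le$, I would compute
\begin{equation*}
\int_0^T\!\!\int u(\d_t\varphi + b\,\d_x\varphi)\,dx\,dt
= \int\!\!\int \bar u(x)\,\tfrac{d}{dt}\varphi(t,X(t,x))\,dt\,dx,
\end{equation*}
and then apply the fundamental theorem of calculus in $t$ for the absolutely continuous curves, recovering the boundary term $-\int\bar u(x)\varphi(0,x)\,dx$ (the terminal term vanishes by compact support of $\varphi$ in time). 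Changing variables via the push-forward back to the Eulerian side is exactly where \eqref{e-X-pushforward} and the area formula from Lemma~\ref{l-level-sets} are invoked.

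I expect the \textbf{main obstacle} to be the interchange of the time-derivative with the spatial integral and the justification of the chain rule $\tfrac{d}{dt}\varphi(t,X(t,x)) = (\d_t\varphi + b\,\d_x\varphi)(t,X(t,x))$ at the level of mere absolute continuity of trajectories together with only $L^1_\loc$ integrability of $\bar u$. Since $b$ is only bounded and $\varphi\in C^1$, the composition $t\mapsto \varphi(t,X(t,x))$ is Lipschitz for a.e. fixed $x$, so the chain rule holds pointwise a.e. in $t$; the difficulty is the Fubini-type exchange, which I would secure by the uniform push-forward bound $X(t,\cdot)_\#\Le \le (1/C_1)\Le$ to dominate the integrand uniformly on the compact $t$-support of $\varphi$ and on a compactifying strip $K$ of the type constructed in Step~3 of the proof of Theorem~\ref{th-L1-unique} (the finite propagation speed $\|b\|_\infty$ keeps trajectories starting in any bounded set inside a fixed cone). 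With domination in hand, Fubini applies and the computation closes.
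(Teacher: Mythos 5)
Your proposal is correct and follows essentially the same route as the paper: you construct the same explicit density $u(t,y)=\bar u(X^{-1}(t,y))\,\rho(t,y)/\rho(0,X^{-1}(t,y))$ from the push-forward identity \eqref{e-X-pushforward}, and verify the weak formulation by the same Lagrangian computation (chain rule along the absolutely continuous trajectories of the regular Lagrangian flow plus the fundamental theorem of calculus in time). The extra care you take with measurability, the sign decomposition of $\bar u$, and the Fubini domination fills in details the paper leaves implicit, but does not change the argument.
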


\begin{proof}
It is straightforward to check that for any $t\in [0,T]$ the inverse $X^{-1}(t,\cdot)$ of the function $X(t,\cdot)$
is given by $X^{-1}(t,x) = Y(0,H(t,x))$. We define $u(t,x)$ as follows:
\begin{equation*}
u(t,x):= \frac{\bar u(X^{-1}(t,x))}{\rho(0,X^{-1}(t,x))} \rho(t,x).
\end{equation*}
Then
\begin{equation*}
\begin{aligned}
u(t,\cdot) \Le &= \frac{\bar u (X^{-1}(t,\cdot))}{\rho (0, X^{-1}(t,\cdot))}X_\# (\rho(0,\cdot) \Le)
\\ &= X_\# \left( \frac{\bar u(\cdot)}{\rho(0,\cdot)}\rho(0,\cdot) \Le \right) = X(t,\cdot)_\# (\bar u \Le)
\end{aligned}
\end{equation*}
Therefore for any $\varphi\in C^1_c([0,T)\times \R)$ by Definition~\ref{d-rlf} and Theorem~\ref{th-existence}
\begin{equation*}
\begin{aligned}
&\int_I\int_\R (\d_t \varphi + b \d_x \varphi) u(t,x) \, dx \, dt
= \int_I\int_\R (\d_t \varphi + b \d_x \varphi) dX(t,\cdot)_\# (\bar u \Le) \, dt
\\&= \int_I\int_\R [(\d_t \varphi)(t,X(t,x)) + b(t,X(t,x)) (\d_x \varphi)(t,X(t,x))] \bar u(x) \, dx \, dt
\\&= \int_I\int_\R \d_t(\varphi(t,X(t,x))) \bar u(x) \, dx \, dt
\\&= -\int_\R \varphi(t,x) \bar u(x) \, dx \, dt. \qedhere
\end{aligned}
\end{equation*}
\end{proof}

\begin{proof}[Proof of Theorem~\ref{t-main}]
Existence follows from Theorem~\ref{th-existence} and uniqueness follows from Theorem~\ref{th-L1-unique}.
\end{proof}

\begin{remark}
It would be interesting to study existence and uniqueness of weak solutions of \eqref{Cauchy-problem}
for vector fields admitting non-negative density which may vanish on the sets of positive measure.
Such vector fields (in particular in dimension one) are relevant to the Kuramoto-Sakaguchi equation \cite{AmadoriHaPark}.
\end{remark}

\section{Compactness of flows}

In \cite{Bressan03} Bressan has proposed the following conjecture:

\begin{conjecture}[\cite{Bressan03}]\label{c-Bressan}
Consider a sequence of smooth vector fields $b_n\colon I\times \R^d \to \R^d$
which are uniformly bounded, i.e. $|b_n| \le C$ for some $C>0$ for all $n\in \N$.
Let $X_n=X_n(t,x)$ denote the classical flow of $b_n$, i.e.
\begin{equation*}
X_n(0,x) = x, \qquad \d_t X_n(t,x) = b_n(t,X_n(t,x)).
\end{equation*}
Suppose that there exist constants $C_1$, $C_2$
\begin{equation}
\begin{gathered}
C_1 \le |\det (\nabla_x X(t,x))| \le C_2,  \quad (t,x)\in I\times \R^d, \notag\\
\|\nabla_x b_n\|_{L^1} \le C_3. \label{e-BV-bound}
\end{gathered}
\end{equation}
Then the sequence $X_n$ is strongly precompact in $L^1_\loc(I\times \R^d; \R^d)$.
\end{conjecture}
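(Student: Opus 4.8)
The plan is to reduce Bressan's compactness conjecture to a uniqueness (renormalization) statement for the continuity equation with the limiting \emph{nearly incompressible} $BV$ vector field, in the spirit of the equivalence between uniqueness of bounded solutions and uniqueness/stability of regular Lagrangian flows recorded in Proposition~\ref{p-regular-lagrangian-flow}. First I would record the a priori bounds: since $|b_n|\le C$ and $X_n(0,\cdot)=\mathrm{id}$, the maps $t\mapsto X_n(t,x)$ are equi-Lipschitz in $t$ with constant $C$, so $X_n(t,x)-x$ is uniformly bounded on compact time intervals. Writing $\rho_n(t,\cdot)$ for the density of $X_n(t,\cdot)_\#\Le^d$, the two-sided Jacobian bound $C_1\le|\det\nabla_x X_n|\le C_2$ gives $1/C_2\le\rho_n\le 1/C_1$, and $\rho_n$ solves $\d_t\rho_n+\div(\rho_n b_n)=0$; thus each $b_n$ is nearly incompressible with uniform constants. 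Passing to a subsequence I obtain weak-$*$ limits $b_n\rightharpoonup b$, $\rho_n\rightharpoonup\rho$ and $\rho_n b_n\rightharpoonup m$ in $L^\infty$.

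Second, I would identify the structure of the limit. Since weak-$*$ convergence in $L^\infty$ implies convergence in $\ss D'$, the gradients satisfy $\nabla_x b_n\to\nabla_x b$ in $\ss D'$; as $\|\nabla_x b_n\|_{L^1}\le C_3$, the limit $\nabla_x b$ is a measure with $|\nabla_x b|(K)\le\liminf_n\|\nabla_x b_n\|_{L^1(K)}$, so $b\in L^\infty_t BV_{x,\loc}$. The bounds on $\rho_n$ pass to $1/C_2\le\rho\le1/C_1$, and the limit continuity equation $\d_t\rho+\div m=0$ holds. The obstruction already visible here is that one cannot conclude $m=\rho b$ by taking products of a weakly and a weakly-$*$ convergent sequence.

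Third, to upgrade weak to strong convergence I would pass to the Lagrangian level. Lift each flow to the measure $\eta_n:=(x\mapsto X_n(\cdot,x))_\#(\rho_n(0,\cdot)\Le^d)$ on the trajectory space $C([0,T];\R^d)$. The uniform bounds and tightness (Prokhorov) give a subsequential limit $\eta$ whose time marginals are $\rho(t,\cdot)\Le^d$. Assuming the identification step succeeds, $\eta$ is concentrated on absolutely continuous integral curves of $b$ and one recovers $m=\rho b$, so $(\rho,b)$ is a nearly incompressible $BV$ pair. Now I would invoke the stability principle behind Proposition~\ref{p-regular-lagrangian-flow} in its $L^\infty$ form: if the continuity equation for the limit $b$ with zero datum has only the trivial bounded solution, then $\eta$ is concentrated on the graph of a \emph{single} flow $X$, every subsequence yields the same deterministic $X$, and the near-incompressibility (which rules out concentration) upgrades $X_n\rightharpoonup X$ to strong convergence in $L^1_\loc(I\times\R^d;\R^d)$.

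The hard part is exactly the hypothesis needed in the last step: uniqueness of bounded solutions of the continuity equation for the limiting nearly incompressible $BV$ field — this is Bressan's near-incompressibility conjecture and is the genuine obstacle, since the one-dimensional Hamiltonian/level-set argument of Theorem~\ref{th-L1-unique} has no direct analogue when $d\ge2$. Its resolution requires the fine structure of $b$: a Lagrangian representation of the measure-divergence field $(\rho,\rho b)$ together with the rank-one rectifiability of $Db$ (Alberti) to renormalize across the jump part of $\nabla_x b$. A secondary but essential difficulty, which I flagged above, is that no time-regularity bound on $b_n$ is available, so a direct Eulerian Aubin--Lions compactness argument for $b_n$ fails and the identification $m=\rho b$ (equivalently, that $\eta$-a.e.\ curve solves the \emph{limit} ODE while $b_n$ converges only weakly) must itself be routed through the $BV$/renormalization structure rather than through strong convergence of the vector fields — which is precisely why near-incompressibility, and not the $BV$ bound alone, is indispensable.
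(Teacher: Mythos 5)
You have not proved the statement, and neither does the paper: Conjecture~\ref{c-Bressan} is quoted from \cite{Bressan03} as an open problem, so there is no proof in the paper to compare against, and your proposal does not close the gap. Your argument reduces the compactness conjecture to the assertion that the continuity equation driven by the limiting nearly incompressible $BV$ field has only the trivial bounded solution with zero data --- but that assertion is itself (the uniqueness form of) Bressan's conjecture for $d\ge 2$, as you yourself concede in the final paragraph. A reduction of an open conjecture to an equivalent open conjecture is not a proof; what you have written out is the known equivalence between Eulerian uniqueness and uniqueness/stability of regular Lagrangian flows (the principle behind Proposition~\ref{p-regular-lagrangian-flow}), not a resolution of either side. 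Moreover, even the reduction is incomplete at precisely the point you flag: with only $b_n \rightharpoonup b$ weakly-$*$ in $L^\infty$ and no time regularity, you cannot identify the Prokhorov limit $\eta$ of the Lagrangian lifts as being concentrated on integral curves of the \emph{limit} field $b$, nor conclude $m=\rho b$; the standard stability theorems require strong $L^1_\loc$ convergence of the fields, which is exactly what is unavailable, so this identification must be carried out quantitatively along the sequence (cf.\ the quantitative result cited in the paper's remark, \cite{StefanoBianchini2006}) rather than at the limit. The circularity is therefore structural, not a presentational shortcut.

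For contrast, what the paper actually proves is Theorem~\ref{th-compactness}: a one-dimensional analogue in which the $BV$ bound \eqref{e-BV-bound} is dropped entirely and only the uniform near-incompressibility bounds $C_1 \le \rho_n \le C_2$ are retained. Its proof is elementary and needs no uniqueness input at all: writing $X_n(t,x)=Y_n(t,H_n(0,x))$ via the Hamiltonian construction, the bilipschitz estimate \eqref{e-H-difference1} and the cone property of the level sets of $H_n$ yield the uniform modulus of continuity $|X_n(t,x)-X_n(t',x')| \le (C_2/C_1)|x-x'| + C|t-t'|$, and Arzel\`a--Ascoli gives compactness in $C(K)$, which is stronger than the $L^1_\loc$ precompactness asserted in the conjecture. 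This Hamiltonian/level-set mechanism is intrinsically one-dimensional and, as you correctly note, has no analogue for $d\ge 2$; but the consequence is that your multi-dimensional program (Lagrangian representation of $(\rho,\rho b)$, Alberti rank-one structure of $Db$, renormalization across the jump part) consists of steps none of which can be supplied by this paper, and each of which would have to be proved from scratch for your proposal to become a proof.
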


\begin{theorem}\label{th-compactness}
Consider a sequence of one-dimensional vector fields $b_n \in L^\infty(I\times \R; \R)$
which are uniformly bounded, i.e. $|b_n| \le C$ for some $C>0$ for all $n\in \N$.
Let $X_n=X_n(t,x)$ denote the (regular Lagrangian) flow of $b_n$.
Suppose that for each $n\in \N$ the vector field $b_n$ is nearly incompressible with density $\rho_n$
and there exist constants $C_1, C_2$ such that
\begin{equation*}
C_1 \le \rho_n \le C_2
\end{equation*}
a.e. on $I\times \R$ for all $n\in \N$.
Then the sequence $X_n$ is precompact in $C(K)$ for any compact $K\subset I\times \R$.
\end{theorem}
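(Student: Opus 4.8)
The plan is to reduce the statement to the Arzel\`a--Ascoli theorem by proving that the flows $X_n$ are uniformly (jointly) Lipschitz on $[0,T]\times\R$, with constants depending only on $C$, $C_1$, $C_2$ and not on $n$. The essential input is the Hamiltonian construction of Section~3: for each $n$ I fix a Hamiltonian $H_n\in\Lip([0,T]\times\R)$ associated with $(\rho_n,b_n)$, normalized by $H_n(0,0)=0$, and write $X_n(t,x)=Y_n(t,H_n(0,x))$ as in \eqref{e-X-def}, where $Y_n(t,\cdot)$ is the inverse of the strictly increasing map $x\mapsto H_n(t,x)$. Note that $X_n$ coincides with the regular Lagrangian flow of $b_n$ by the argument following Proposition~\ref{p-regular-lagrangian-flow}, so there is no ambiguity in its definition.

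First I would collect the uniform estimates. Since each $\rho_n$ obeys $C_1\le\rho_n\le C_2$ with the \emph{same} constants, the bilipschitz bound \eqref{e-H-difference1} holds for every $H_n(t,\cdot)$ with those common constants; together with $\d_t H_n=-\rho_n b_n$ and $|\rho_n b_n|\le C_2 C$ this makes the family $\{H_n\}$ equi-Lipschitz on $[0,T]\times\R$. By the computation in the proof of Lemma~\ref{l-level-sets}, the inverses then satisfy, uniformly in $n$, that $h\mapsto Y_n(t,h)$ is $1/C_1$-Lipschitz and $t\mapsto Y_n(t,h)$ is Lipschitz with constant $\|b_n\|_\infty\le C$.

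Next I would assemble the bound on $X_n$. In space, $|X_n(t,x)-X_n(t,x')|\le C_1^{-1}|H_n(0,x)-H_n(0,x')|\le (C_2/C_1)|x-x'|$; in time, $|X_n(t,x)-X_n(t',x)|\le C|t-t'|$. Hence the $X_n$ are jointly equi-Lipschitz on $[0,T]\times\R$. For uniform boundedness I would observe that $X_n(0,x)=Y_n(0,H_n(0,x))=x$ by definition of $Y_n(0,\cdot)$, so that $|X_n(t,x)|\le|x|+C|t|$ and the family is uniformly bounded on every compact $K\subset I\times\R$. Equicontinuity and uniform pointwise boundedness then give precompactness in $C(K)$ by Arzel\`a--Ascoli.

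I do not expect a genuine obstacle here: the whole difficulty is absorbed by the one-dimensional Hamiltonian reformulation, which upgrades near incompressibility with two-sided density bounds into honest Lipschitz regularity of the flow — a conclusion strictly stronger than the $L^1_\loc$ precompactness sought in Bressan's conjecture. The only points demanding care are the $n$-independence of all constants, which is guaranteed precisely by the uniform hypotheses $|b_n|\le C$ and $C_1\le\rho_n\le C_2$, and the mild bookkeeping needed to pass from the separate Lipschitz bounds in $t$ and $x$ to joint equicontinuity.
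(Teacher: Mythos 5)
Your proof is correct and follows essentially the same route as the paper: write $X_n(t,x)=Y_n(t,H_n(0,x))$ as in \eqref{e-X-def}, use the uniform estimates from the proof of Lemma~\ref{l-level-sets} (the $1/C_1$-Lipschitz bound in $h$ and the $\|b_n\|_\infty\le C$ bound in $t$) to obtain the joint equi-Lipschitz estimate $|X_n(t,x)-X_n(t',x')|\le (C_2/C_1)|x-x'|+C|t-t'|$, and conclude by Arzel\`a--Ascoli. Your explicit check of uniform pointwise boundedness via $X_n(0,x)=x$ is a detail the paper leaves implicit, but the argument is the same.
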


\begin{proof}
By \eqref{e-X-def} and the estimates from the proof of Lemma~\ref{l-level-sets}
one can easily deduce that for any $n\in \N$
\begin{equation*}
|X_n(t,x) - X_n(t',x')| \le \frac{C_2}{C_1}|x-x'| + \|b\|_\infty |t-t'|
\end{equation*}
for all $x,x'\in \R$ and $t,t'\in[0,T]$. Therefore it remains to apply Arzel\`a-Ascoli theorem.
\end{proof}

\begin{remark}
Theorem~\ref{th-compactness} shows that in the one-dimensional case Conjecture holds even without assuming BV bound \eqref{e-BV-bound}.
A quantitative version of Conjecture~\ref{c-Bressan} assuming only the BV bound \eqref{e-BV-bound} (without near incompressibility) has be established in \cite{StefanoBianchini2006}.
\end{remark}

\section{Acknowledgements}
This work is supported by the Russian Science Foundation under grant \No 14-50-00005.
The author would like to thank Debora Amadori, Paolo Bonicatto, Fran\c{c}ois Bouchot and Gianluca Crippa for interesting discussions of this work and their valuable remarks.  

\bibliographystyle{unsrt}
\bibliography{bibliography}

\end{document}